\theoremstyle{definition}
\newtheorem{definition}{Definition}
\theoremstyle{plain}
\newtheorem{theorem}[definition]{Theorem}
\newtheorem*{lemma*}{Lemma}
\newtheorem*{conj*}{Conjecture}
\theoremstyle{remark}
\title[Collapse of random groups]
{Collapse of random triangular groups:\\ a  closer look}
\author{Sylwia Antoniuk}
\address{Adam Mickiewicz University,
Faculty of Mathematics and Computer Science
ul.~Umultowska 87,
61-614 Pozna\'n, Poland}
\email{\tt antoniuk@amu.edu.pl}
\author{Tomasz \L{u}czak}
\address{Adam Mickiewicz University,
Faculty of Mathematics and Computer Science
ul.~Umultowska 87,
61-614 Pozna\'n, Poland}
\email{\tt tomasz@amu.edu.pl}
\author{Jacek \'{S}wi\c{a}tkowski}
\address{Instytut Matematyczny,
Uniwersytet Wroc{\l}awski,
pl. Grunwaldzki 2/4,
50-384 Wroc{\l}aw,
Poland}
\email{swiatkow@math.uni.wroc.pl}
\thanks{Jacek \'Swi\c atkowski partially
supported by the Polish Ministry of Science and Higher
Education (MNiSW), Grant N201 541738.}
\keywords {random group, collapse, intersection graphs}
\subjclass[2010]{
Primary:
  20P05; %Probabilistic methods in group theory
secondary:
  20F05, %Generators, relations, and presentations
	05C80.% random graphs
 }
\date{January 14, 2013}
\begin{document}
\maketitle

\begin{abstract}
The random triangular group $\Gamma(n,t)$ is a group given by a presentation $P=\langle S|R\rangle$, where $S$ is a set of $n$ generators
and $R$ is a random set of $t$ cyclically reduced words of length three. The asymptotic behavior of $\Gamma(n,t)$ is in some respects similar to
that of widely studied density random group introduced by Gromov. In particular, it is known that if $t\leq n^{3/2-\varepsilon}$
for some $\varepsilon > 0$, then with probability $1-o(1)$ $\Gamma(n,t)$ is infinite and hyperbolic, while for $t\geq n^{3/2+\varepsilon}$,
with probability $1-o(1)$
it
%$\Gamma(n,t)$
is trivial. In this note we  show that $\Gamma(n,t)$ collapses provided only that $t\geq C n^{3/2}$ for some constant $C>0$.
\end{abstract}

%\section[Introduction]{Introduction}

By $\langle S|R \rangle$ we denote a presentation $P$ of a group, where $S$ is the set of generators and $R$ the set of relations. A presentation $P$ is
a \textit{triangular presentation} if the set $R$ of relations consists of distinct cyclically reduced words of length three only
(over the alphabet $S\cup S^{-1}$  which consists of generators and their formal inverses).
A \textit{triangular group} is a group given by a triangular presentation.

In the paper we investigate the properties of groups given by presentations $P=\langle S|R \rangle$, where
$S$ is a set of $n$ generators and $R$ is a random subset of the set
$\mathcal{T}$ of all $N = 2n(4n^2-6n+3)\sim 8n^3$ distinct cyclically reduced words of length three, that is words of the form
$abc$, where $a\neq b^{-1}$, $b\neq c^{-1}$ and $c\neq a^{-1}$. Here we consider two models of a random
triangular group. In the random uniform model $\Gamma(n,t)$, the set of relations $R$ is chosen uniformly at random from the family of
all ${N\choose t}$ subsets of $\mathcal{T}$ of size $t$. In the binomial model $\Gamma(n,p)$, we select each element from
$\mathcal{T}$ independently with probability $p$. We shall study the asymptotic properties of these two models as
$n\rightarrow \infty$. In particular, we say that for a given function $t(n)$ [or $p(n)$] the group $\Gamma(n,t(n))$
[resp. $\Gamma(n,p(n))$] has a property a.a.s. (asymptotically almost surely) if the probability that the random group has this
property tends to 1 as $n\rightarrow \infty$. It should be mentioned that, as is well known in the theory of random structures,
the asymptotic behavior of $\Gamma(n,t)$ and $\Gamma(n,p)$ can be proved to be very similar provided $t\sim Np$. We  remark that $\Gamma(n,t)$
is basically identical with the model of random group introduced by \.{Z}uk \cite{Z2003}
who also pointed out the connection between his model
and a better known
construction  of random groups introduced by Gromov \cite{G1993}
(a more precise explanation of this relationship is given in \cite{O2005} and \cite{KK2012}).

%it turns out that the behavior of $\Gamma(n,t)$ is in many
%(but by no means all) cases similar to Gromov's model with
%the same density.

It is known (see \cite{Z2003} and Theorem 29 in \cite{O2005}) that
when the density $d=\frac{\ln t}{3\ln n}$ of $\Gamma(n,t)$ is smaller
than $1/2-\varepsilon$ for some $\varepsilon>0$, then a.a.s. $\Gamma(n,t)$ is  infinite
and hyperbolic, while for
$d > 1/2 + \varepsilon$  a.a.s. $\Gamma(n,t)$ collapses to the trivial group.  Here we study more precisely the case when the density is $1/2+o(1)$. As it was pointed out in \cite{O2005} part IV.a, this is the usual question after a talk on random groups.
The main results of this note  can be stated as follows.

\begin{theorem} \label{th1}
There exists an absolute constant $C_1 >0$ such that if $p\geq C_1 n^{-3/2}$ then a.a.s. $\Gamma(n,p)$ is trivial.
\end{theorem}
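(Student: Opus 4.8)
The plan is to work directly in the binomial model $\Gamma(n,p)$ with $p=C_1n^{-3/2}$ for a large absolute constant $C_1$, and to expose the random relation set in three independent rounds $R_1,R_2,R_3$, each with inclusion probability of order $C_1n^{-3/2}$; the independence of the rounds is what will let me condition freely. The basic mechanism is the following elementary \emph{identification principle}: if two relations agree in two of their three letter positions, then the two remaining letters (one from each relation) must coincide as elements of $S\cup S^{-1}$. Concretely, $abx=1$ and $aby=1$ force $x=b^{-1}a^{-1}=y$. Reading the matched letters as generators up to inversion, each such coincidence is a relation of the form $x=y$ or $x=y^{-1}$; since the matched letters are about as likely to occur with either sign, roughly half of these identifications are ``negative''. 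I will record all coincidences as the edges of a \emph{signed graph} $G_1$ on the vertex set $S$.

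First I would analyse $R_1$ and the graph $G_1$. A first/second moment computation shows that the probability that a fixed pair of generators is joined by a coincidence is of order $C_1^2/n$ (there are of order $n^2$ possible common two-letter blocks, each contributing a factor $p^2$), so $G_1$ behaves like the Erd\H{o}s--R\'enyi graph $G(n,c/n)$ with $c$ of order $C_1^2$. Choosing $C_1$ large enough that $c>1$, a.a.s.\ $G_1$ has a giant component $A$ with $|A|\ge\alpha n$ for some constant $\alpha>0$. Since a connected set of identifications forces all of its generators to become equal up to sign, $A$ collapses to a single generator $g$; and because a positive proportion of the (of order $n$) edges inside the giant are negative, the component is a.a.s.\ \emph{unbalanced}, i.e.\ it contains a cycle along which the signs multiply to $-1$. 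Travelling around such a cycle yields $g=g^{-1}$, that is $g^2=1$, so every generator of $A$ is in fact equal to $g$.

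Next I would use $R_2$ to absorb the remaining generators. For any generator $b$, the expected number of relations in $R_2$ of the form $ba_1a_2=1$ with $a_1,a_2\in A\cup A^{-1}$ is of order $|A|^2n^{-3/2}\ge\alpha^2 n^{1/2}\to\infty$; since this count fails to be positive only with probability $\exp(-\Omega(n^{1/2}))$, a union bound over the $n$ generators shows that a.a.s.\ every $b$ satisfies such a relation, whence $b=a_2^{-1}a_1^{-1}\in\langle g\rangle$. Thus a.a.s.\ the whole group is generated by $g$, with $g^2=1$. Finally, $R_3$ a.a.s.\ contains at least one relation $uvw=1$ all of whose letters lie in $A\cup A^{-1}$ (the expected number is again of order $n^{3/2}$); such a relation reads $g^3=1$, which together with $g^2=1$ forces $g=1$. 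Hence every generator is trivial and $\Gamma(n,p)$ is trivial a.a.s.

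The hard part will be the giant-component step for $G_1$. The coincidence events attached to different pairs of generators are not independent --- a single relation takes part in a coincidence with every other relation sharing its first two letters, so the edges arrive in small correlated cliques --- and one must check that these dependencies do not spoil the emergence of a linear-size component; a careful second moment estimate, or a direct branching-process exploration of the coincidence structure, should suffice. Two features the rest of the argument hinges on are worth stressing. The negative identifications are essential: without them the construction would only yield $g^3=1$ and leave open a quotient of $\mathbb{Z}/3$. And only a giant component, not connectivity of $G_1$, is needed, because the abundance of relations --- reflected in the divergence of the order-$n^{1/2}$ expectations above --- lets the later rounds mop up the generators outside $A$. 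This is precisely why the threshold sits at a constant multiple of $n^{-3/2}$ rather than carrying a logarithmic factor.
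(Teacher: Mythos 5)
Your overall architecture coincides with the paper's: both proofs collapse a linear-sized set of generators to a single element $g$ by observing that two relations sharing a two-letter block identify their remaining letters, then derive $g^2=1$ and $g^3=1$, and finally mop up the leftover generators by a union bound whose failure probability is $n\exp(-\Theta(\sqrt n))$. Your rounds $R_2$ and $R_3$ are essentially the paper's second stage. The first genuine difference is how $g^2=1$ is obtained: the paper builds its coincidence graph on the vertex set $S_1\cup S_1^{-1}$ (generators \emph{and} their inverses, for half the generators) and arranges the giant component to have more than $|S_1|$ vertices, so that by pigeonhole it contains some $s$ together with $s^{-1}$; this avoids any discussion of signs. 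Your signed-graph route is viable, but your stated justification --- ``a positive proportion of the edges are negative, hence the component is unbalanced'' --- is not a proof: balance is a property of cycle signs, not of the count of negative edges (a complete bipartite graph with all cross edges negative is balanced). What you actually need, and can get, is that the giant component has $\Theta(n)$ independent cycles and that the edge signs are close to independent fair coins conditionally on the unsigned graph, so that all fundamental cycles are positive only with probability $2^{-\Theta(n)}$.

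The second difference is the more important one: the step you explicitly defer --- the emergence of a giant component in the coincidence graph despite the edges arriving in correlated cliques --- is exactly where the paper does its real work. The paper restricts the first stage to relations containing exactly one letter from $S_1\cup S_1^{-1}$, which turns the coincidence structure into a bona fide random intersection graph $G_{n,m,\rho}$ (vertices are the elements of $S_1\cup S_1^{-1}$, features are the pairs $cd$ with $c,d\in S_2\cup S_2^{-1}$, $c\ne d^{-1}$, and the feature assignments are genuinely independent because distinct vertex--feature pairs involve disjoint triples of candidate relations), and then quotes Behrisch's theorem on the giant component of random intersection graphs with $m=n^{\alpha}$, $\alpha>1$. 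In your unrestricted version a single relation $abc$ feeds three different features at once, so the assignments are not independent and no off-the-shelf theorem applies directly; a branching-process exploration would still work, but it has to be written out, and as it stands this is a real gap rather than a routine verification. So: right strategy and correct mop-up, but the one lemma you wave at is precisely the one that needs either the paper's clean reduction to random intersection graphs or an actual proof.
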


\begin{theorem} \label{th2}
There exists an absolute constant $C_2 >0$ such that if $t\geq C_2 n^{3/2}$ then a.a.s. $\Gamma(n,t)$ is trivial.
\end{theorem}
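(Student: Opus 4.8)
The plan is to deduce Theorem~\ref{th2} from Theorem~\ref{th1} by a standard coupling between the binomial and uniform models, exploiting the fact that triviality is a monotone property of the relation set: enlarging $R$ replaces $\langle S\mid R\rangle$ by one of its quotients, and every quotient of the trivial group is trivial. Thus, if $R\subseteq R'$ and $\langle S\mid R\rangle$ is trivial, then $\langle S\mid R'\rangle$ is trivial as well.

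First I would choose the binomial parameter $p$ so that its expected number of relations sits safely below $t$ while still lying in the range covered by Theorem~\ref{th1}. Concretely, set $p=t/(2N)$, so that $Np=t/2$. Since $N=2n(4n^2-6n+3)<8n^3$, the hypothesis $t\geq C_2 n^{3/2}$ forces $p=t/(2N)\geq C_1 n^{-3/2}$ as soon as $C_2$ is large enough (for instance $C_2=16C_1$ works for all $n$, since then $t\geq 16C_1 n^{3/2}>2NC_1 n^{-3/2}$). Hence Theorem~\ref{th1} applies and a.a.s.\ $\Gamma(n,p)$ is trivial.

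Next I would control $|R_p|$, the number of relations selected in the binomial model, which is distributed as $\mathrm{Bin}(N,p)$ with mean $Np=t/2$. By a Chernoff bound, $\Pr(|R_p|>t)=\Pr\bigl(|R_p|>2\,\mathbb{E}|R_p|\bigr)\leq e^{-ct}$ for some absolute constant $c>0$, and since $t\to\infty$ this is $o(1)$; thus a.a.s.\ $|R_p|\leq t$. To finish, I would realize both models on one probability space. Conditioned on $|R_p|=m$, the set $R_p$ is uniform among all $m$-subsets of $\mathcal{T}$, so whenever $m\leq t$ one may enlarge it to a uniform $t$-subset $R_t\supseteq R_p$ by adjoining $t-m$ further elements chosen uniformly from $\mathcal{T}\setminus R_p$; the resulting $R_t$ has exactly the distribution defining $\Gamma(n,t)$. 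On the a.a.s.\ event that both $\Gamma(n,p)$ is trivial and $|R_p|\leq t$, monotonicity forces $\Gamma(n,t)$ to be trivial, proving the theorem.

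I expect no serious obstacle here: the only points requiring care are the numerical choice of $C_2$ (tracking the precise value of $N$) and the conditional-uniformity fact underlying the coupling, both of which are routine. The genuine content of the result lies entirely in Theorem~\ref{th1}.
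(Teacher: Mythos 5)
Your proposal is correct and matches the paper's approach: the paper's proof of Theorem~\ref{th2} simply invokes monotonicity of triviality together with the standard equivalence of the uniform and binomial models for monotone properties (citing the random-graph literature), and your argument is exactly that standard equivalence written out in full (choice of $p=t/(2N)$, Chernoff bound on $|R_p|$, and the conditional-uniformity coupling). The numerical choice $C_2=16C_1$ and the verification that $p\geq C_1 n^{-3/2}$ are also correct.
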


%\section{Proofs of Theorem \ref{th1} and Theorem \ref{th2}}

Our proof of Theorem \ref{th1} is based on the following fact concerning so called
random intersection graphs.  An \textit{intersection graph} is a graph on vertex set $V$ where each
vertex $v$ is assigned a set of features $W_v$ which is a subset of a ground set of features $W$.
Two vertices $v_1$ and $v_2$ are adjacent if they share a common feature, i.e. if $W_{v_1} \cap W_{v_2} \neq \emptyset$.
In a \textit{random intersection graph} $G_{n,m,\rho}$ the set of vertices $V$ is of size
$n$, the set of features $W$ is of size $m$ and for any vertex $v$ we assign each feature from $W$ to $v$ independently with
probability $\rho$.

Note that the probability that two vertices of  $G_{n,m,\rho}$  are adjacent is roughly
$$\hat{p} = 1-(1-\rho^2)^m\sim \rho^2 m.$$
If we denote
$m = n^{\alpha}$ then it turns out that for $\alpha > 1$ the random intersection graph $G_{n,m,\rho}$ behaves
in a similar manner as the random graph $G(n,\hat{p})$, in which each
pair of vertices is adjacent with probability $\hat{p}$, independently for each pair.
In 1960 Erd\H{o}s and R\'{e}nyi \cite{ER1960} in their seminal
paper on the evolution of random graphs proved that
when $n\hat{p}>c>1$, then a.a.s. in $G(n,\hat p)$ one can find
a unique giant component
which contains a positive fraction of all vertices
(more precisely, they showed it for an equivalent uniform model of the random graph, for details see \cite{ER1960} or \cite{JLR2000}).
  Berisch~\cite{B2007} showed that the fact that in $G_{n,m,\rho}$
the edges do not occur independently does not affect
very much the emergence of the giant component for the range of parameters
we are interested in. In particular, a special case of Theorem 1 in \cite{B2007}
is the following.

\begin{lemma*} \label{th3}
Let $G_{n,m,\rho}$ be a random intersection graph
with $m = n^{\alpha}$,
where $\alpha>1$, and $\rho^2m = \frac{\beta}{n}$.
Furthermore, for $\beta > 1$,
let $\gamma$ be the unique
solution to $\gamma = \exp(\beta(\gamma - 1))$ in the interval $(0,1)$.
Then a.a.s. the size of the largest component in $G_{n,m,\rho}$ is of order $(1+o(1))(1-\gamma)n$.

In particular, if $\beta\geq 1.42$ then a.a.s. $G_{n,m,\rho}$ contains a component of size at least 0.52n.
\end{lemma*}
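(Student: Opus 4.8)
The plan is to analyze the component structure of $G_{n,m,\rho}$ through a breadth-first exploration process and to compare it with a Galton--Watson branching process whose offspring distribution is $\mathrm{Poisson}(\beta)$. Since $\rho^2m=\beta/n$, the expected degree of a vertex is $(n-1)\hat p\sim (n-1)\rho^2m\sim\beta$, so heuristically the neighbourhood of a typical vertex resembles that in the Erd\H os--R\'enyi graph $G(n,\beta/n)$. The extinction probability of a $\mathrm{Poisson}(\beta)$ branching process is exactly the root in $(0,1)$ of $q=\exp(\beta(q-1))$, i.e.\ our $\gamma$, and the survival probability is $1-\gamma$. Thus the target conclusion is that a $(1-\gamma)$-fraction of the vertices lie in large components, and that these together form a single giant component of size $(1+o(1))(1-\gamma)n$.

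First I would set up the exploration: starting from a vertex $v$, at each step reveal the features of the current vertex and declare as new neighbours all previously unseen vertices sharing one of these features. The hypothesis $\alpha>1$ enters precisely here, guaranteeing that the two-level feature structure does not distort the offspring law. Writing $\rho=\sqrt\beta\,n^{-(1+\alpha)/2}$, the expected number of vertices carrying a fixed feature is $n\rho=\sqrt\beta\,n^{(1-\alpha)/2}\to0$; hence each feature of $v$ is shared with exactly one further vertex with probability $\approx n\rho^2=\beta/m$ and with two or more only with negligible probability $O(n^2\rho^3)$. Because feature assignments are independent across features, the number of new neighbours of $v$ is a sum of $m$ independent contributions, each equal to $1$ with probability $\approx\beta/m$ and at least $2$ with probability $O(n^2\rho^3)$, so by the Poisson limit theorem it converges to $\mathrm{Poisson}(\beta)$; moreover only $o(n)$ features in total are shared by three or more vertices, so feature-induced triangles touch a negligible set of vertices. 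I would couple this exploration with a $\mathrm{Poisson}(\beta)$ branching process, keeping the coupling valid until the explored set reaches size $\omega(n^{2/3})$ or dies out, controlling the sampling-without-replacement discrepancy by standard concentration.

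From the coupling it follows that each explored component either remains of size $O(\log n)$, with probability $\gamma+o(1)$, or reaches size $\omega(n^{2/3})$, with probability $1-\gamma+o(1)$, and a second-moment computation on the number of vertices in components of size $O(\log n)$ shows this number is $(1+o(1))\gamma n$ a.a.s. The remaining and, I expect, the main obstacle is \emph{uniqueness}: upgrading ``a $(1-\gamma)$-fraction of vertices lies in large components'' to ``there is a single giant component of that size.'' For this I would use a sprinkling argument, exposing the features in two independent rounds: the first produces many components of size $\omega(n^{2/3})$, and the second is used to show that any two of them are a.a.s.\ joined by a fresh edge. The delicate point is that, because edges are not independent, one must sprinkle \emph{features} rather than edges and verify that the two rounds remain genuinely independent while the second still behaves like an Erd\H os--R\'enyi sprinkling. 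Finally, solving $\gamma=\exp(1.42(\gamma-1))$ gives $\gamma\approx0.473$, so $1-\gamma>0.52$; since the extinction probability $\gamma$ decreases as $\beta$ grows, the bound $0.52n$ persists for every $\beta\ge1.42$, yielding the concluding assertion.
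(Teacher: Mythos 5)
Your proposal is correct in outline, but note that the paper does not actually prove this lemma: it is quoted verbatim as a special case of Theorem~1 of Behrisch's paper on component evolution in random intersection graphs, so the ``proof'' in the paper is a citation. What you have written is a from-scratch argument along the standard lines --- breadth-first exploration coupled with a $\mathrm{Poisson}(\beta)$ Galton--Watson process, a second-moment count of vertices in small components, and a feature-level sprinkling step for uniqueness --- and it is close in spirit to how the result is established in the literature. The comparison, then, is one of economy versus transparency: the citation buys the authors a one-line justification, while your sketch makes explicit exactly where the hypothesis $\alpha>1$ enters (each feature is a.a.s.\ held by at most two vertices encountered in an exploration, since $n\rho\to 0$ and the expected number of features held by three or more vertices is $O(n^{3/2-\alpha/2})=o(n)$, so the graph is locally tree-like and the offspring law is asymptotically $\mathrm{Poisson}(m\cdot n\rho^2)=\mathrm{Poisson}(\beta)$), and why one must sprinkle features rather than edges to preserve independence between the two rounds. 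Your numerics are also right: the root of $\gamma=\exp(1.42(\gamma-1))$ in $(0,1)$ is $\gamma\approx 0.473$, so $1-\gamma\approx 0.527>0.52$, and monotonicity of the extinction probability in $\beta$ gives the final assertion for all $\beta\ge 1.42$. To turn the sketch into a complete proof you would still need to write down the coupling carefully (controlling both vertex depletion and the depletion of already-revealed features up to explored sets of size $n^{2/3}$, which works since the revealed fraction of features is $O(n^{2/3}\cdot m\rho/m)=O(n^{2/3-(\alpha+1)/2})=o(1)$) and carry out the variance computation, but these are routine; there is no conceptual gap.
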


\begin{proof}[Proof of Theorem \ref{th1}]
We shall show that the assertion holds whenever $C_1\ge 1.2$. 
Let us  partition the set of generators $S$ into two sets 
$S_1$ and $S_2$, where
$|S_1| = \lceil |S|/2 \rceil = \lceil n/2 \rceil$. We generate the set of relations $R$ of $\Gamma(n,p)$ in two stages.
Firstly, we consider  the relations  which contain exactly one element
from $S_1\cup S_1^{-1}$, and  select each such relations
 independently with probability $p=1.2 n^{-3/2}$. Then, we
select each of the remaining candidates for relations with the same
probability $p=1.2 n^{-3/2}$. We denote by $R_1$ the random set of relations generated in the first stage, and by $R_2$ the relations chosen in the second stage, so that  $R=R_1 \cup R_2$.

Consider an auxiliary random intersection graph $G_{n,m,\rho}$ with
vertex set $V=S_1\cup S_1^{-1}$, the set of objects $W = \{cd: c,d\in S_2\cup S_2^{-1}, c\neq d^{-1}\}$ and such that for
any $a\in V$ a feature $cd\in W$ is assigned to $a$ in $G_{n,m,\rho}$ if we have generated at least one
relation from the set $\{acd, cda, dac\}$. Note that here $m=n(n-1)$ and $\rho=1-(1-p)^3\geq p$. Therefore,
$\beta = \rho^2 m n \geq p^2 n^2(n-1) \geq 1.44 \frac{n-1}{n}$.

Using Lemma \ref{th3} we infer that a.a.s. $G_{n,m,\rho}$ contains a large component $L$ of at least $0.52n$ vertices. Note
however, that if two vertices $a,b$ are adjacent in $L$, then they share a common feature $cd\in W$, which implies that
$a=d^{-1}c^{-1}=b$ in $\Gamma(n,p)$. Moreover,
since $|L|>|S_1|$, for some $s\in S_1$, $L$ must contain both $s$ and $s^{-1}$. Consequently all elements of
$L$ are not only equal to $s$, but also satisfy the condition $s^2=e$.

Now consider relations generated in the second stage. Let $X$ count the number of elements $s$ from $L$ such that for any
$s', s''\in L$ the relation $ss's''$ does not belong to $R_2$. Then
$$\text{Pr}(X > 0) \leq \mathbb{E}X \leq 0.52n(1-p)^{0.25n^2} \leq 0.52n\exp(-0.36\sqrt{n}),$$
and tends to 0 as $n\rightarrow\infty$. Hence, a.a.s. $L$ contains three elements $s,s',s''$ such that $ss's''\in R_2$. However, since
all elements from $L$ are equal, we conclude that $s^3=e$. This, together with the condition that $s^2=e$, implies that a.a.s.
all generators or inverses of generators contained in $L$ are equivalent to the identity.

In a similar way one can estimate the number of generators $s$ not contained in $L$, for which no triple $ss's''$ belongs to $R_2$,
where $s',s''\in L$. Again, the probability that there is at least one such generator tends to 0 as $n\rightarrow \infty$,
therefore a.a.s. each generator from $S$ is equivalent to the identity and the assertion follows.
\end{proof}

\begin{proof}[Proof of Theorem \ref{th2}]
Note that the property that $\langle S|R \rangle$ is trivial
is a monotone property of the set of relations~$R$. 
Therefore, to show that the asymptotic behavior of
$\Gamma(n,t)$ in terms of generating a trivial group is similar to the asymptotic behavior of $\Gamma(n,p)$, one can use
exactly the same method as in the proof of asymptotic equivalence for monotone properties of the uniform random graph model
$G(n,M)$ and the binomial random graph model $G(n,p)$
(see \cite{JLR2000}).
\end{proof}

The result proved above together with the following conjecture
(stated for $\Gamma(n,t)$, but having the analogue for $\Gamma(n,p)$),
provide a sharp threshold for collapsibility of a triangular random group.

\begin{conj*}
There exists a constant $c>0$ such that for every $t\le cn^{3/2}$
a.a.s. $\Gamma(n,t)$ is infinite.
\end{conj*}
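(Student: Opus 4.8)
The plan is to attack infiniteness through the geometry of van Kampen diagrams over the random presentation, using a first–moment estimate calibrated \emph{exactly} at the critical density $d=\tfrac12$ (which is where $t$ of order $n^{3/2}$ sits, since $t\sim N^{d}$ and $N\sim 8n^{3}$). First I would fix a single generator, say $w=a_{1}$, and bound the expected number of reduced van Kampen diagrams $D$ with boundary word $w$. A reduced triangular diagram with $f$ faces and boundary length $L$ has $(3f-L)/2$ internal edges; each internal edge forces one letter to match (probability $\sim(2n)^{-1}$), and each face must be realised by a relator from the random set (probability $\sim t/N$). Carrying out the bookkeeping at $t=c\,n^{3/2}$, the powers of $n$ coming from $t^{f}$ and from the matching factor cancel in the face variable, leaving
$$ \mathbb{E}\big[\#\{D:\ \partial D=w,\ |D|=f\}\big]\ \le\ (C_{0}\,c)^{f}\,n^{-L/2}, $$
where $C_{0}$ is a universal constant absorbing the (exponential in $f$) number of abstract planar triangular shapes and the geometric factor $2^{-3/2}$. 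For $L=1$ and $c<1/C_{0}$ the right–hand side is summable over $f$ and tends to $0$, so a.a.s.\ $a_{1}\neq e$ and $\Gamma(n,t)$ is nontrivial.

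The serious work is to upgrade nontriviality to infiniteness, and here I would aim for hyperbolicity together with non–elementariness, from which infiniteness (indeed the existence of a free subgroup) follows by standard Gromov theory. The natural target is a linear isoperimetric inequality $|\partial D|\ge\delta\,|D|$ for all reduced $D$, with $\delta=\delta(c)>0$. In the subcritical regime $d<\tfrac12$ this is automatic, because the density slack contributes a factor $n^{\,3f(d-1/2)}$ that decays exponentially in $f$ and swamps the diagram–entropy $C_{0}^{f}$. Exactly at $d=\tfrac12$ that slack vanishes, and everything rests on the single inequality $C_{0}\,c<1$ relating $c$ to the shape–counting constant $C_{0}$.

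The main obstacle is precisely this balance together with the summation over boundary words. To establish the isoperimetric inequality one must bound diagrams with \emph{arbitrary} boundary, and summing the fixed–word estimate over the $\sim(2n)^{L}$ reduced words of length $L$ turns the harmless factor $n^{-L/2}$ into a growing factor $(2n)^{L/2}$. For diagrams with small relative boundary, $L\le\delta f$, this reaches $(2n)^{\delta f/2}$, so the expected count behaves like $\big(C_{0}\,c\,(2n)^{\delta/2}\big)^{f}$, whose base exceeds $1$ once $n$ is large. Thus the naive first moment is inconclusive for hyperbolicity at the exact critical density: it proves that each \emph{fixed} short word is nontrivial but cannot, on its own, exclude a large finite collapse. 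Overcoming this will require (i) a genuinely sharper count of \emph{reduced} diagram shapes — controlling the number of internal vertices and excluding doubled or cancelling adjacent faces in the spirit of Ollivier's diagram reductions — so that $C_{0}$ is pinned to an explicit value, and (ii) a device stronger than the first moment (a second–moment, or a local-to-global Cartan–Hadamard argument) that prevents the boundary–entropy factor $(2n)^{L/2}$ from being summed independently across faces. I expect step (ii) to be the crux: it is the exact-critical analogue of the delicate phase-transition analysis underlying Theorem~\ref{th2}.

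Finally, I would corroborate and guide this analysis with the intersection–graph picture used for the trivial direction. There, triviality was driven by a \emph{giant} component in the auxiliary random intersection graph, which forced every generator to collapse. For infiniteness one wants the dual statement: when $c$ is small the analogous auxiliary structure is \emph{subcritical}, so the forced identifications of generators stay confined to small clusters and cannot cascade globally. Making this precise — and, crucially, converting ``no global cascade of identifications'' into an honest geometric lower bound on the group (a positive isoperimetric constant, hence an infinite hyperbolic group rather than merely a nontrivial finite one) — is, in my view, where the remaining difficulty of the conjecture is concentrated.
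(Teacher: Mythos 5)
The statement you are asked about is stated in the paper as a \emph{conjecture}: the authors give no proof of it, only circumstantial evidence, so there is no argument of theirs to match yours against. Your proposal, by your own account, is not a proof either, and the gap is exactly where you locate it. The first--moment bound $(C_0 c)^f n^{-L/2}$ for a \emph{fixed} boundary word (granting the bookkeeping, which itself needs care about diagrams that reuse a relator, since repeated faces contribute fewer factors of $t/N$ than $f$) at best yields that a fixed generator is nontrivial a.a.s.; it does not exclude that $\Gamma(n,t)$ is a large nontrivial \emph{finite} group, and nontriviality is strictly weaker than the conjectured infiniteness. To get infiniteness you propose hyperbolicity via a linear isoperimetric inequality, but as you observe, at density exactly $1/2$ the union bound over the $\sim(2n)^L$ boundary words destroys the estimate for diagrams with $L\le\delta f$, and neither the sharper shape count (i) nor the local-to-global device (ii) is supplied. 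Since (ii) is precisely the content of the open problem, the proposal is a correct diagnosis of the difficulty rather than a proof.

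It is worth noting that the route the paper itself suggests is different from yours and arguably lighter: they conjecture that for small $c$ the presentation is a.a.s.\ \emph{aspherical} (no reduced spherical van Kampen diagram). Asphericity makes the presentation complex a classifying space, whence $\Gamma$ is torsion-free and $\chi(\Gamma)=1-n+t\ne 1$; a torsion-free nontrivial group is automatically infinite, so no isoperimetric inequality or hyperbolicity is needed. This replaces your target (a linear isoperimetric inequality for \emph{all} reduced diagrams at critical density) by the exclusion of \emph{spherical} diagrams only, for which the paper reports partial results (no spherical diagrams with pairwise distinct relators, and none with a bounded number of cells). If you pursue the problem, reorienting your first--moment machinery toward spherical diagrams --- where there is no boundary word to sum over, hence no $(2n)^L$ entropy factor --- is likely the more promising variant of your plan; the remaining obstruction there is controlling spherical diagrams with many repeated relators, not the boundary-entropy issue you identify.
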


As a matter of fact, we conjecture that for small $c$ and
for $t<cn^{3/2}$ a.a.s.
$\Gamma=\Gamma(n,t)$ is an aspherical group, i.e.
it has no reduced spherical van Kampen diagram.
In this case the presentation complex
$C$ of this group is aspherical,
and hence it is a classifying space for this group, which has
the following consequences:
\begin{itemize}
\item the Euler characteristic $\chi(\Gamma)=\chi(C)=1-n+t$;
\item $\Gamma$ is torsion-free (see \cite{Br}, p. 187).
\end{itemize}

\noindent
Since a group $\Gamma$ with $\chi(\Gamma)\ne1$ is nontrivial (and this is true
when $t\sim n^{3/2}$),
%and in view of the earlier mentioned hyperbolicity result of \.Zuk,
the above ``asphericity conjecture''
implies Conjecture (for $t\ll n^{3/2}$ Conjecture is true by the earlier mentioned
result of \.Zuk).

As a circumstantial evidence supporting the above ``asphericity conjecture''
we remark that it is not hard to show that for small $c$ and $t\le cn^{3/2}$ a.a.s.
$\Gamma(n,t)$ has no reduced spherical van Kampen diagram with cells corresponding to pairwise distinct relations. Moreover, for
every constant $A$, a.a.s.  $\Gamma(n,t)$ has no reduced spherical van Kampen diagram with fewer than $A$ cells.

%Note however that for the range of $p$ we consider $\Gamma(n,t)$ is not ``uniformly'' hyperbolic (by this we mean that even if $\Gamma(n,t)$  is a.a.s. hyperbolic the constant in the definition of hyperbolicity cannot be bounded from above by a constant $B$ independent of $n$, although, possibly, could be bounded by some function of $n$). Consequently, local-to-global principle typically used in such cases does not  apply for this range  of $t$.

\medskip
Finally, let us mention that one can easily use the same argument
to show that for every $\ell$ there exists a constant
$a_\ell$ such that a.a.s. as $m\to\infty$
the random group given by presentations $P=\langle S|R \rangle$,
where $S$ is a generating set of cardinality $m$, and where $R$ is a random subset of the set of
cyclically reduced words of length $\ell$,
collapses to the trivial group provided only
$|R|\ge a_\ell (2m-1)^{\ell/2}$. If $\ell$ is a prime one can just mimic
our argument; in the remaining cases there is a tiny technical problem
to deal with rare cyclical words which can be easily solved. This
result matches nicely a theorem of Gady Kozma, mentioned
in~\cite{O2010}, which concerns the Gromov model
when  $m$ is fixed and
$\ell\to\infty$.

\section*{Acknowledgment}
We would like to thank Katarzyna Rybarczyk for her valuable remarks on random intersection graphs.

\bibliographystyle{plain}

\end{document}